\newcommand{\Rmnum}[1]{\expandafter\@slowromancap\romannumeral #1@}
\numberwithin{equation}{section}
\newtheorem{lemma}{Lemma}[section]
\newtheorem{proposition}{Proposition}[section]
\newtheorem{remark}{Remark}[section]
\newtheorem{theorem}{Theorem}[section]
\newcommand\theref[1]{Theorem~\ref{#1}}
\newcommand\lemref[1]{Lemma~\ref{#1}}
\newcommand\proref[1]{Proposition~\ref{#1}}
\title{Time-periodic solution to nonhomogeneous isentropic compressible Euler equations with time-periodic boundary conditions}
\author{Huimin Yu\thanks{{e}-mail: hmyu@sdnu.edu.cn} \quad Xiaomin Zhang\thanks{{e}-mail: 1340473344@qq.com} \quad Jiawei Sun\thanks{Corresponding author {e}-mail: sunjiawei0122@163.com}
 \\ \small\textit{ Department of mathematics, Shandong Normal University, Jinan 250014 China}}
\begin{document}
\date{}
\maketitle
\begin{center}
\begin{minipage}{130mm}{\small
\textbf{Abstract}:
In this paper, we study one-dimensional nonhomogeneous isentropic compressible Euler equations with time-periodic boundary conditions. With the aid of the energy methods, we prove the existence and uniqueness of the time-periodic supersonic solutions after some certain time.      

\textbf{Keywords}: Isentropic compressible Euler equations, nonhomogeneous term, time-periodic boundary conditions, time-periodic supersonic solutions        

\textbf{Mathematics Subject Classification 2010}:35B10, 35A01, 35Q31}      

\end{minipage}
\end{center}
\section{Introduction}
\indent\indent In this paper, we study the initial-boundary value problem for one dimensional isentropic compressible Euler system with nonhomogeneous term
\begin{equation}\label{a1}
\left\{\begin{aligned}
&\rho_{t}+(\rho u)_{x}=0,\\
&(\rho u)_{t}+(\rho u^{2}+p)_{x}=\alpha\rho u,\\
&\rho(0,x)=\underline{\rho},\, u(0,x)=\underline{u},\\
&\rho(t,0)=\rho_{l}(t),\, u(t,0)=u_{l}(t)
\end{aligned}\right.
\end{equation}
 in the domain $(t,x)\in[0,\infty)\times[0,L]$ with a given constant $L>0$, where $\rho\geq 0$ and $u\in \mathbb{R}$ represent the density and velocity of the fluid, respectively. $p$ presents the pressure function, which is given by
  \begin{equation}\label{p}
  p=a\rho^\gamma,~~a>0,~~\gamma>1
 \end{equation}
  for the isentropic polytropic gas. Here the initial data $\underline{\rho}>0$ and $\underline{u}>0$ are two constants, the boundary conditions $\rho_{l}(t),~u_{l}(t)$ are periodic functions with a period $P>0$ and satisfy the corresponding compatibility conditions, i.e.
 \begin{equation}\label{bp}
 \rho_{l}(t)=\rho_{l}(t+P), ~~u_{l}(t)=u_{l}(t+P),
 \end{equation}
 and
  \begin{equation}\label{CC}
 \rho_{l}(0)=\underline{\rho}, ~~u_{l}(0)=\underline{u}.
 \end{equation}
 The term $\alpha\rho u$ in the second equation of (\ref{a1}) is the so-called damping (acceleration) effect on the
fluid for $\alpha<0$ ($\alpha>0$). Here $\alpha$ denote the external force coefficient.

We consider the inflow problem for the one-dimensional isentropic compressible Euler equation with source term $\alpha(t)\rho u$. We focus on the conditions, imposed on the external force coefficient $\alpha(t)$, that the Euler equation can trigger a time periodic solution by a time periodic boundary condition $(\rho_l(t), u_l(t))$. We consider this problem for two reasons:
  In one hand, from a physical point of view, the force produced by the wall of (constant-area, convergent, or divergent) duct \cite{Shapiro} can be regarded as some source term added to the Euler system. On the other hand, Yuan in \cite{Yuan} considered a similar problem for Euler equation without source term, we want to extend the results to a more general model.\\
   \indent Moreover, since the initial data is a constant which is independent of space variable $x$, the boundary data $\rho(0,t), u (0,t)$ is only functions of time. A natural consideration is whether the system have a stable smooth solution, which is also independent of space $x$, for later time?

There are many important progresses on the studies of the time-periodic solutions of the partial differential equations, for instance the viscous fluids equations~\cite{Cai,Ma,Jin,Luo,Mats} and the hyperbolic conservation laws~\cite{G,Tak,Tem,Oh}. All of the works mentioned above discuss the time-periodic solutions which are driven by the time-periodic external forces or the piston motion. For the case of the time-periodic boundary condition, there are fewer works on the time-periodic solutions of the hyperbolic conservation laws. Yuan considered the time-periodic solution for the isentropic compressible Euler equations (i.e. $\alpha=0$) triggered by time-periodic supersonic boundary condition in~\cite{Yuan}. For the quasilinear hyperbolic system
\begin{align*}
\partial_{t}u+A(u)\partial_{x}u=0,\quad (t,x)\in {\mathbb{R}}\times [0,L]
\end{align*}
with a more general time-periodic boundary conditions, Qu studied the existence and stability of the time-periodic solutions around a small neighborhood of $u\equiv0$ in~\cite{Qu}. Intensive literatures have investigated the
isentropic compressible Euler system with source terms. We refer to~\cite{Chen,Sui} and the references therein for the results on the formation of singularity, \cite{Yu,Cao,Fang,Huang,H} for the existence and large time behavior of weak solutions, and~\cite{Chen,Cui,Li,Hou,Hsiao,Mar,Nishihara} for
the asymptotic behavior of smooth solutions, etc.\\
\indent We assume the coefficient $\alpha=\alpha(t)=\alpha(t+P)$, which belongs to $C^{2}$ and satisfies
 \begin{align}
 &0\leq \int_{0}^{t}\alpha(s)ds<+\infty,\quad \forall t\in[0,\infty), \label{a2}\\
 &\int_{0}^{P}\alpha(t)dt=0.\label{a3}
 \end{align}
Obviously, $\alpha(t)\equiv0$ satisfy $(\ref{a2})$ and $(\ref{a3})$. In this case, the equations $(\ref{a1})$ turns into the usual isentropic compressible Euler equations. Yuan proved the existence of the time-periodic supersonic solution derived by the periodic boundary condition $(\ref{a1})_4$ in~\cite{Yuan}, where the initial data is assumed near a supersonic constant state $(\underline{\rho},\underline{u})$ with
 \begin{align}
 \underline{u}>\underline{c}:=\sqrt{a\gamma}\underline{\rho}^{\frac{\gamma-1}{2}}. \label{snc}
 \end{align}
 Moreover, the existence of time-periodic weak solution with small initial data around $(\underline{\rho},\underline{u})$ is also considered in~\cite{Yuan}.\\
 \indent In this paper, we consider the one dimensional isentropic compressible Euler equations with the time-periodic external force coefficient $\alpha(t)$, and prove the existence of a time-periodic supersonic solution induced by the time-periodic boundary condition. Unlike the constant supersonic state $(\underline{\rho},\underline{u})$, here we consider the time-periodic background supersonic solution $(\underline{\rho},e^{\int_{0}^{t}\alpha(s)ds}\underline{u})$ and prove that the problem ~$(\ref{a1})$-$(\ref{a3})$ has a time-periodic supersonic solution after some certain time. Our precise results are sated below.
\noindent\begin{theorem}\label{T1}
There exist positive constants $\varepsilon_{0},T_{0}$ and $C_{0}$, such that if
\begin{align}\label{a4}
\|\rho_{l}-\underline{\rho}\|_{H^{2}([0,P])}+\|u_{l}-e^{\int_{0}^{t}\alpha(s)ds}\underline{u}\|_{H^{2}[0,P]}\leq\varepsilon
\end{align}
for any $\varepsilon\leq\varepsilon_{0}$, then the initial-boundary value problem~\eqref{a1} admits a unique solution$(\rho,u)\in C^{1}([0,+\infty)\times [0,L])$, which satisfies
\begin{align}
&\rho(t+P,x)=\rho(t,x), u(t+P,x)=u(t,x),\quad \forall t>T_{0},x\in[0,L], \label{PC1}\\
&\|\rho-\underline{\rho}\|_{C^{1}([0,\infty)\times[0,L])}+\|u-e^{\int_{0}^{t}\alpha(s)ds}\underline{u}\|_{C^{1}([0,\infty)\times[0,L])}\leq C_{0}\varepsilon. \label{a5}
\end{align}
\end{theorem}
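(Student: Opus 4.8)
The plan is to reduce the problem to a perturbation around the explicit time-periodic background state $(\bar\rho,\bar u)(t)=(\underline{\rho},\,e^{\int_0^t\alpha(s)\,ds}\underline{u})$, which one checks directly solves $(\ref{a1})_1$–$(\ref{a1})_2$ with the constant initial data $(\underline{\rho},\underline{u})$ (spatial homogeneity makes $\rho_t=0$, $(\rho u)_t=\alpha\rho u$ trivially consistent). Setting $\sigma=\rho-\underline{\rho}$ and $v=u-\bar u(t)$, I would diagonalize the system using Riemann invariants $w_\pm=v\pm\int_{\underline\rho}^{\rho}\frac{c(s)}{s}\,ds$ (with $c=\sqrt{a\gamma}\rho^{(\gamma-1)/2}$), so that the linearized characteristic speeds are $\lambda_\pm=u\pm c$. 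The supersonic condition $(\ref{snc})$, together with the bound $(\ref{a2})$ on $\int_0^t\alpha$ (which keeps $\bar u$ bounded below away from $\underline c$ for the perturbed solution as long as the perturbation stays small), guarantees $0<\lambda_-<\lambda_+$, i.e. \emph{both} characteristics point into the domain from $x=0$ and exit at $x=L$. Hence prescribing both $\rho_l,u_l$ at $x=0$ and nothing at $x=L$ is the correct well-posed configuration, and the finite duct $[0,L]$ is traversed in finite time by both families.

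Next I would set up the iteration/local existence in $C^1$ via the characteristic method (or an energy estimate in $H^2$, whose embedding into $C^1$ in one space dimension gives the stated regularity). The a priori estimate is the heart of the matter: one writes the transport equations for $w_\pm$ and for their first derivatives $\partial_t w_\pm, \partial_x w_\pm$ along characteristics, and integrates. Because the flow is supersonic, every characteristic through a point $(t,x)$ with $t$ larger than the transit time $T_*=L/\inf\lambda_-$ originates on the boundary $x=0$ at an earlier time, never on the initial line $t=0$. This is the mechanism that "forgets" the initial data: for $t>T_*$ the solution at $(t,x)$ is determined entirely by the boundary data on a time-interval of length $\le T_*$ preceding $t$. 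Combined with the smallness $(\ref{a4})$ of the boundary perturbation and a Gronwall argument controlling the (integrable, mean-zero) forcing $\alpha(t)$ and the quadratic nonlinear terms, this yields a uniform bound $\|\sigma\|_{C^1}+\|v\|_{C^1}\le C_0\varepsilon$ on $[0,\infty)\times[0,L]$, hence global existence with no shock formation — the usual small-data-supersonic-duct phenomenon.

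For the time-periodicity $(\ref{PC1})$, I would exploit the domain-of-dependence structure just described. Define the shift operator $\mathcal{S}$ sending a solution on $[0,\infty)\times[0,L]$ to its time-$P$ translate. Since $\alpha$ and the boundary data are $P$-periodic, and since for $t>T_0:=T_*$ (up to a constant, enlarged to absorb a second transit for the derivative estimates) the value of $(\rho,u)(t,x)$ depends only on boundary data in $[t-T_*,t]$, the solution and its $P$-translate solve the \emph{same} characteristic boundary-value problem on $\{t>T_0\}$. Uniqueness in the small-$C^1$ class (proved by the same energy/characteristic estimate applied to the difference of two solutions, which satisfies a homogeneous linear transport system with a Gronwall bound forcing the difference to vanish) then gives $\rho(t+P,x)=\rho(t,x)$, $u(t+P,x)=u(t,x)$ for all $t>T_0$.

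The main obstacle I anticipate is the a priori $C^1$ (equivalently $H^2$) bound in the presence of the source term: one must show that the time-periodic but only \emph{integrable-in-average} factor $e^{\int_0^t\alpha(s)\,ds}$ multiplying the background velocity does not destroy the strict supersonicity $\lambda_->0$ uniformly in $t$, and that the inhomogeneous terms $\alpha\rho u$ and their $t$-derivatives (involving $\alpha'\in C^1$) contribute only lower-order, Gronwall-absorbable errors after subtracting the background. Condition $(\ref{a3})$, $\int_0^P\alpha=0$, is exactly what keeps $e^{\int_0^t\alpha}$ itself $P$-periodic and bounded, so that the background state is genuinely time-periodic and the perturbation analysis closes; checking carefully that all constants in the estimate can be chosen independent of $t$ (not just on one period) is the delicate bookkeeping step.
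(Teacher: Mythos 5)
Your proposal is correct and follows essentially the same route as the paper: perturb around the time-periodic background $(\underline{\rho},e^{\int_0^t\alpha(s)ds}\underline{u})$, pass to Riemann invariants, use the supersonic condition to treat $x$ as the evolution variable so that both characteristics emanate from the boundary $x=0$, close an $H^2$-in-time Gronwall estimate over $x\in[0,L]$, and obtain \eqref{PC1} for $t>T_0=L/\inf\lambda_1$ by applying the same energy/Gronwall argument to the difference between the solution and its $P$-translate, which satisfies a homogeneous problem once the backward characteristics land on the positive $t$-axis. The only organizational difference is that the paper assembles the global solution by gluing a genuinely periodic solution on $[T_0,T_0+P]\times[0,L]$ (extended periodically) to the solution of the initial-boundary value problem on $[0,T_1]\times[0,L]$, rather than proving global existence in one stroke and then deducing periodicity, but the underlying estimates are identical.
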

\indent There are a few remarks in order.
\noindent\begin{remark}\label{R1}
The conditions (\ref{a2}) and (\ref{a3}), imposed on the external force coefficient $\alpha (t)$, are proposed to ensure the background solution $(\underline{\rho},e^{\int_{0}^{t}\alpha(s)ds}\underline{u})$ is a periodic supersonic solution.

As for the assumption on $\alpha(t)$ in~\eqref{a2}, namely
$$
\int_0^t \alpha(s)ds> 0,
$$
can be extended to
$$
\int_0^t \alpha(s)ds> \ln \underline{c} - \ln \underline{u},
$$
which is imposed to guarantee the background solution is supersonic everywhere.
\end{remark}
\noindent\begin{remark}\label{R2}
The conditions (\ref{a2}) and (\ref{a3}) are only sufficient conditions to ensure the existence of time-periodic solution. To see this, we assume $\alpha\equiv const.\neq0$ which do not satisfy these two conditions. However, the periodic boundary condition can trigger time-periodic solution, too. We will deal with this problem in the following works.
\end{remark}
\noindent\begin{remark}\label{R3}
Supersonic is an essential assumption which implies that all characteristics propagate forward in both space and time. In particular, this implies that the effects of any nonlinear interaction
at $(x_0, t_0)$ are confined to the region ${x > x_0, t > t_0}$. Supersonic condition plays a very important role in our proof. For example, in the case of exchanging $x$ and $t$ in (\ref{b2}), we need $\lambda_{1}$ and $\lambda_{2}$ are not zero in all $(t,x)\in[0,+\infty)\times[0,L]$; in the proof of~\theref{thm2}, the positive definite matrix $\Lambda$ is significant. However, these two things can be ensured by the supersonic condition.
\end{remark}
\noindent\begin{remark}\label{R4}
The results can be extend to higher estimate by using a standard $H^s$ energy estimates together with a finite speed of propagation/domain of dependence
argument, here we omit the detail.
\end{remark}
\noindent\begin{remark}\label{R5}
Using a similar method, we can also consider the problem with nonlinear friction $\alpha(t)\rho u |u|^\theta$, for $\theta>0$, here omit the detail.
\end{remark}
The rest of the paper is organized as follows. In Section 2, we give some basic facts for the Euler equations. In Section 3, we show that the smooth solution to the initial-value problem~\eqref{a1} is time-periodic after some certain time when it is a small perturbation of the supersonic background state. In Section 4, we give the proof of~\theref{T1} by the aid of two solutions of the two initial(-boundary) value problems stated in~\theref{thm1} and~\theref{thm2}.
\section{Preliminary and Formulation}\label{s2}
\indent\indent We first introduce some basic facts for system $(\ref{a1})$. The eigenvalues are
\begin{align}
\lambda_{1}=u-c=u-\sqrt{a \gamma}\rho^{{\gamma-1}\over 2},\quad \lambda_{2}=u+c=u+\sqrt{a \gamma}\rho^{{\gamma-1}\over 2}
\end{align}
and the corresponding right eigenvectors are \begin{align*}
\vec{r}_1=\frac{1}{\sqrt{\rho^{2}+c^{2}}}(\rho, -c)^T,\quad \vec{r}_2=\frac{1}{\sqrt{\rho^{2}+c^{2}}}(\rho, c)^T.
\end{align*}
With the aid of the Riemann invariants
\begin{align}\label{b1}
r=\frac{1}{2}(u-\frac{2}{\gamma-1}c),\quad s=\frac{1}{2}(u+\frac{2}{\gamma-1}c),
\end{align}
the problem~\eqref{a1} can be rewritten as follows
\begin{equation} \label{IVP1}
\begin{cases}
&r_{t}+\lambda_{1}(r,s)r_{x}=\frac{\alpha(t)(r+s)}{2},\\
&s_{t}+\lambda_{2}(r,s)s_{x}=\frac{\alpha(t)(r+s)}{2},\\
&r(0,x)=\underline{r},\, s(0,x)=\underline{s},\\
&r(t,0)=r_{l}(t),\, s(t,0)=s_{l}(t)
\end{cases}
\end{equation}
with
\begin{align}
\underline{r}&=\frac{1}{2}\underline{u}-\frac{\sqrt{a\gamma}}{\gamma-1}\underline{\rho}^{\frac{\gamma-1}{2}},\quad
\underline{s}=\frac{1}{2}\underline{u}+\frac{\sqrt{a\gamma}}{\gamma-1}\underline{\rho}^{\frac{\gamma-1}{2}},\notag\\
r_{l}(t)&=\frac{1}{2}u_{l}(t)-\frac{\sqrt{a\gamma}}{\gamma-1}{\rho}^{\frac{\gamma-1}{2}}_{l}(t),\quad
s_{l}(t)=\frac{1}{2}u_{l}(t)+\frac{\sqrt{a\gamma}}{\gamma-1}{\rho}^{\frac{\gamma-1}{2}}_{l}(t). \label{pb1}
\end{align}
After exchanging the roles of $t$ and $x$ in~\eqref{IVP1}, we consider the following Cauchy problem
\begin{equation}\label{b2}
\left\{\begin{array}{lll}
r_{x}+\frac{1}{\lambda_{1}}r_{t}=\frac{\alpha(t)(r+s)}{2\lambda_{1}},\\
 \\
s_{x}+\frac{1}{\lambda_{2}}s_{t}=\frac{\alpha(t)(r+s)}{2\lambda_{2}},\\
r(t,0)=r_{\ast}(t)=\left\{
\begin{array}{lr}
r_{l}(t),~~~t>0,\\
 \\
r_{\alpha}(t),~~~t\leq0,
\end{array}\right.\\
 \\
s(t,0)=s_{\ast}(t)=\left\{
\begin{array}{lr}
s_{l}(t),~~~t>0,\\
\\
s_{\alpha}(t),~~~t\leq0.
\end{array}\right.
\end{array}\right.
\end{equation}
Here
\begin{align}
r_{\alpha}(t)=\frac{1}{2}e^{\int_{0}^{t}\alpha(s)ds}\underline{u}-\frac{1}{\gamma-1}\underline{c},\quad
s_{\alpha}(t)=\frac{1}{2}e^{\int_{0}^{t}\alpha(s)ds}\underline{u}+\frac{1}{\gamma-1}\underline{c},
\end{align}
which can be looked as a background periodic solution of problem~\eqref{b2} satisfying
\begin{align*}
r'_{\alpha}(t)=\frac{\alpha(t)}{2}(r_\alpha+s_\alpha),\quad
s'_{\alpha}(t)=\frac{\alpha(t)}{2}(r_\alpha+s_\alpha).
\end{align*}

\section{Periodic Solution}\label{s3}
\indent\indent In this Section, we will prove the supersonic smooth solution $(r,s)(t,x)$, satisfying the a-priori estimate
\begin{align}\label{b3}
\|r-r_{\alpha}\|_{C^{1}(R\times[0,L])}+\|s-s_{\alpha}\|_{C^{1}(R\times[0,L])}\leq \varepsilon
\end{align}
for some sufficiently small $\varepsilon>0$, is a time-periodic function when $t$ is big enough. To this end, we set
$$
m=\left(
 \begin{array}{c}
 r(t,x)-r_{\alpha}(t)\\
 s(t,x)-s_{\alpha}(t)\\
 \end{array}
\right),
$$
which satisfies  that
\begin{equation} \label{DU1}
\left\{
\begin{array}{llll}
&m_{x}+\Lambda(t,x) m_{t}=\frac{\alpha (t)}{2}\Lambda(t,x)
\left(
\begin{array}{c}
 r-r_{\alpha}+s-s_{\alpha}\\
 r-r_{\alpha}+s-s_{\alpha}\\
 \end{array}
 \right),\\
&m(t,0)=\left\{
\begin{aligned}
&(r_{l}(t)-r_{\alpha}(t), s_{l}(t)-s_{\alpha}(t))^{T},\quad t>0,\\
&0,~~~~~~~~~~~~~~~~~~~~~~~~~~~~~~~~~~~~\quad t\leq0,
\end{aligned}\right.
\end{array}
\right.
\end{equation}
where
\begin{align}
\Lambda=\Lambda(t,x)=\left(
            \begin{array}{cc}
            \frac{1}{\lambda_{1}(r(t,x),s(t,x))} & 0\\
            0 & \frac{1}{\lambda_{2}(r(t,x),s(t,x))}
            \end{array}
            \right).
\end{align}
From \eqref{b3}, for small $\varepsilon>0$, we deduce that the flow is still supersonic and
\begin{align}
\lambda_{0}=\inf_{t\geq0,x\in[0,L]}\lambda_{1}(r(t,x),s(t,x))>0.
\end{align}
We next prove that
\begin{equation}\label{b4}
\begin{split}
&r(t+P,x)-r_{\alpha}(t+P)=r(t,x)-r_{\alpha}(t),\\
&s(t+P,x)-s_{\alpha}(t+P)=s(t,x)-s_{\alpha}(t),
\end{split}
\end{equation}
for any $t\geq T_{0}:=\frac{L}{\lambda_{0}}$ and $x\in[0,L]$. Then we can conclude $r(t,x)$ and $s(t,x)$ are time-periodic functions.

Write
$$
V(t,x)=m(t+P,x)-m(t,x).
$$
After a straightforward computation, we have from~\eqref{DU1} that
\begin{align}\label{b5}
\left\{\begin{aligned}
&V_{x}+\Lambda(t,x)V_{t}=G(t,x),\\
&V(t,0)=\left\{
\begin{aligned}
&(r_{l}(t+P)-r_{\alpha}(t+P),s_{l}(t+P)-s_{\alpha}(t+P))^{T},\\
&~~~~~~~~~~~~~~~~~~~~~~~~~~~~~~~~~~~~~~~~~~~~~~~~~~~~~~~P\leq t\leq0,\\
&0,~~~~~~~~~~~~~~~~~~~~~~~~~~~~~~~~~~~~~~~~~~~~~~~~t>0,~ \text{or}\, ~t<-P,
\end{aligned}\right.
\end{aligned}\right.
\end{align}
where
\begin{align*}
G(t,x)=&\frac{\alpha(t)}{2}\Lambda(t+P,x)\left(
\begin{array}{c}
r(t+P,x)+s(t+P,x)\\
r(t+P,x)+s(t+P,x)\\
\end{array}
\right)\\
&-\frac{\alpha(t)}{2}\Lambda(t,x)\left(
                            \begin{array}{c}
                            r(t,x)+s(t,x)\\
                            r(t,x)+s(t,x)\\
                            \end{array}
\right)\\
&-(\Lambda(t+P,x)-\Lambda(t,x))\left(
              \begin{array}{c}
              r'_{\alpha}(t+P)\\
              s'_{\alpha}(t+P)\\
              \end{array}
\right)\\
&-\big(\Lambda(t+P,x)-\Lambda(t,x)\big)m_{t}(t+P,x).
\end{align*}
In above calculations, we have used the facts that
$$\alpha(t+P)=\alpha(t), ~{\rm{and}}~ \int_0^{t+P}\alpha(s)ds=\int_0^t\alpha(s)ds.$$

By~\eqref{a2}, \eqref{a3} and \eqref{b3} with $\varepsilon>0$ small enough, we have for any $t\in[0,\infty)$ and $x\in[0,L]$ that
\begin{align}
&|m_{t}(t+P,x)|\leq C_{2}\varepsilon,\label{NE1}\\
&|\Lambda(r(t,x),s(t,x))|+|\Lambda_{t}(r(t,x),s(t,x))|\leq C_{2},\label{b16}\\
&|r(t+P,x)+s(t+P,x)|+|r'_{\alpha}(t+P)|+|s'_{\alpha}(t+P)|\leq C_{2},\label{NE2}\\
&|\Lambda(t+P,x)-\Lambda(t,x)|\leq |\Lambda_{r}| |r(t+P,x)-r(t,x)|\notag\\
&~~~~~~~~~~~~~~~~~~~~~~~~~~~~~~~+|\Lambda_{s}| |s(t+P,x)-s(t,x)| \leq C_{2}|V(t,x)|,\label{b6}
\end{align}
where
$$
\Lambda_{r}=\left(
            \begin{array}{cc}
            -\frac{1}{\lambda_{1}^{2}}\frac{\gamma+1}{2} & 0\\
            0 & -\frac{1}{\lambda_{2}^{2}}\frac{3-\gamma}{2}\\
\end{array}
\right),\quad
\Lambda_{s}=\left(
            \begin{array}{cc}
            -\frac{1}{\lambda_{1}^{2}}\frac{\gamma-3}{2} & 0\\
            0 & -\frac{1}{\lambda_{2}^{2}}\frac{\gamma+1}{2}\\
\end{array}
\right),
$$
and $C_{2}>0$ only depends on $\underline{\rho},\underline{u},\gamma$ and $L$. Then we obtain from~\eqref{NE1}-\eqref{b6} that
\begin{equation}\label{b7}
\begin{split}
|G(t,x)|=&|\frac{\alpha(t)}{2}[\Lambda(t+P,x)-\Lambda(t,x)]\left(
            \begin{array}{c}
            r(t+P,x)+s(t+P,x)\\
            r(t+P,x)+s(t+P,x)\\
            \end{array}
\right)\\
&+\frac{\alpha(t)}{2}\Lambda(t,x)\left(
            \begin{array}{c}
            r(t+P,x)-r(t,x)+s(t+P,x)-s(t,x)\\
            r(t+P,x)-r(t,x)+s(t+P,x)-s(t,x)\\
            \end{array}
\right)\\
&-(\Lambda(t+P,x)-\Lambda(t,x))\left(
            \begin{array}{c}
            r'_{\alpha}(t+P)\\
            s'_{\alpha}(t+P)\\
            \end{array}
\right)\\
&-(\Lambda(t+P,x)-\Lambda(t,x))m_{t}(t+P,x)|\\
\leq& C_{3} |V(t,x)|,
\end{split}
\end{equation}
where $C_{3}>0$ only depends on $\underline{\rho},\underline{u},\gamma$ and $L$.

Fixing a point $(t',x')$ with $t'>T_{0}$ and $0<x'<L$, we draw the slow and fast characteristic curves $\Gamma_{1}:t=t_{1}(x)$ and $\Gamma_{2}:t=t_{2}(x)$
\begin{align*}
\frac{dt_{1}}{dx}=\frac{1}{\lambda_{1}(r(t_{1},x),s(t_{1},x))},\,\,t_{1}(x')=t',\\
\frac{dt_{2}}{dx}=\frac{1}{\lambda_{2}(r(t_{2},x),s(t_{2},x))},\,\,t_{2}(x')=t',
\end{align*}
which intersect the t-axis. Noting that $\Gamma_{1}$ is below $\Gamma_{2}$ as shown in the figure below.
\begin{figure}[H]
\centering
\includegraphics[width=8cm]{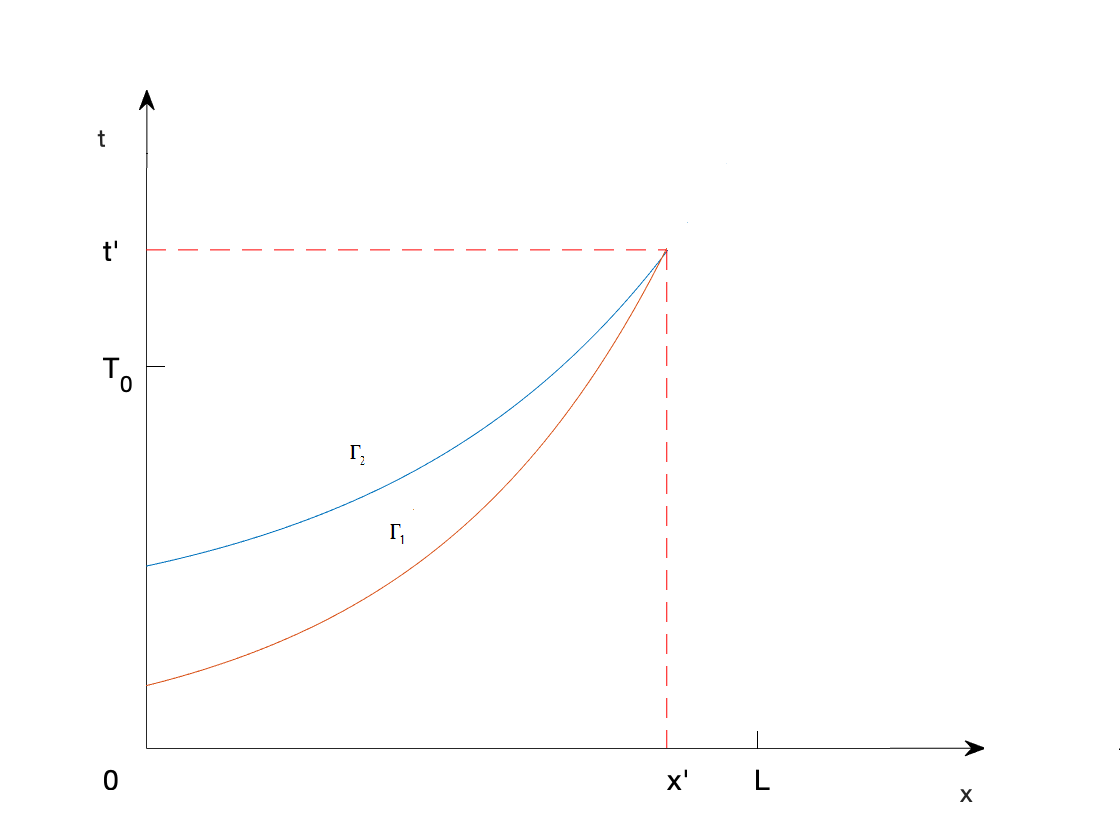}
\end{figure}
For any $x\in[0,x')$, we set
\begin{align}\label{b8}
I(x)=\frac{1}{2}\int_{t_{1}(x)}^{t_{2}(x)}|V(t,x)|^{2}dt.
\end{align}
Due to $t'>T_{0}:=\frac{L}{\lambda_{0}}$ and the definition of $\lambda_{0}$, we have $(t_{1}(0),t_{2}(0))\subset(0,+\infty)$. Then by (\ref{b5}), $V(t,0)\equiv0$, therefore
\begin{align}
I(0)=0. \label{inl1}
\end{align}
Taking derivative on $I(x)$ with regard to $x$, it holds that
\begin{align*}
I'(x)=& \int_{t_{1}(x)}^{t_{2}(x)}{V(t,x)^{T}V_{x}(t,x)}dt+\frac{1}{2}|V(t_{2}(x),x)|^{2}{\frac{1}{\lambda_{2}(t_{2}(x),x)}}\\
&-\frac{1}{2}|V(t_{1}(x),x)|^{2}{\frac{1}{\lambda_{1}(t_{1}(x),x)}}\\
\leq&-\int_{t_{1}(x)}^{t_{2}(x)}V(t,x)^{T}\Lambda(t,x)V_{t}(t,x)dt+\int_{t_{1}(x)}^{t_{2}(x)}V(t,x)^{T}G(t,x)dt\\
&+\frac{1}{2}V(t,x)^{T}\Lambda(t,x)V(t,x)|_{t=t_{1}(x)}^{t=t_{2}(x)}\\
=&-\frac{1}{2}\int_{t_{1}(x)}^{t_{2}(x)}\Big{(}(V(t,x)^{T}{\Lambda(t,x)V(t,x))}_{t}-V(t,x)^{T}\Lambda_{t}(t,x)V(t,x)\Big{)}dt\\
&+\int_{t_{1}(x)}^{t_{2}(x)}V(t,x)^{T}G(t,x)dt+\frac{1}{2}V(t,x)^{T}\Lambda(t,x)V(t,x)|_{t=t_{1}(x)}^{t=t_{2}(x)}\\
=&\frac{1}{2}\int_{t_{1}(x)}^{t_{2}(x)}V(t,x)^{T}\Lambda_{t}(t,x)V(t,x)dt+\int_{t_{1}(x)}^{t_{2}(x)}V(t,x)^{T}G(t,x)dt\\
\leq&(\frac{C_{2}}{2}+C_{3})I(x).
\end{align*}
In the last inequality above, we have used~\eqref{b16} and \eqref{b7}. Using the Gronwall's inequality, we can get from~\eqref{inl1} that $I(x)\equiv0$. Then it follows from the continuity of $I(x)$ that $I(x')=0$ which implies that $V(t',x')=0$.

Because of the arbitrary of $(t',x')$, we have
\begin{align*}
V(t,x)\equiv0,\quad \forall~t>T_{0},\,\, x\in[0,L].
\end{align*}
Thus, we prove~\eqref{b4}. Since $\alpha(t)$ and $\int_{0}^{t}\alpha(s)ds$ are periodic functions with a period $P>0$, then we get from~\eqref{b4} that $r(t)$ and $s(t)$ are also periodic functions with a period $P>0$. Namely,
\begin{align*}
r(t+P,x)=r(t,x),\quad s(t+P,x)=s(t,x),\,\, \forall~t>T_{0},\,x\in[0,L].
\end{align*}
\section{Existence of Solutions}\label{S4}
\indent\indent In this Section, we prove the existence of the time-periodic solution $m=(m_{1},m_{2})^{T}:=(r-r_{\alpha}, s-s_{\alpha})^{T}$ for $(t,x)\in[0,+\infty)\times[0,L]$ to the following Cauchy problem
\begin{equation}\label{c1}
\left\{\begin{aligned}
&m_{x}+\Lambda m_{t}=\frac{\alpha(t)}{2}\Lambda
\left(
\begin{array}{c}
 r-r_{\alpha}+s-s_{\alpha}\\
 r-r_{\alpha}+s-s_{\alpha}\\
 \end{array}
 \right),\\
&m(t,0)=m_{0}(t)=
(r_{l}(t)-r_{\alpha}(t), s_{l}(t)-s_{\alpha}(t))^{T}.
\end{aligned}\right.
\end{equation}
Firstly, for the smooth solution $m$ to the Cauchy problem~\eqref{c1} in $(t,x)\in[T_{0},T_{0}+P]\times [0,L]$, we will establish an a-priori estimate
\begin{eqnarray}\label{c3}
\sup\limits_{x\in[0,L]}\| m \|_{H^{2}([T_{0},T_{0}+P])}<\delta
\end{eqnarray}
for some constant $\delta>0$ small enough.
Denoting
$$
\left(
  \begin{array}{c}
    r-r_{\alpha}+s-s_{\alpha} \\
    r-r_{\alpha}+s-s_{\alpha} \\
  \end{array}
\right)= \left(
  \begin{array}{cc}
    1 & 1 \\
    1 & 1 \\
  \end{array}
\right)\left(
  \begin{array}{c}
    r-r_{\alpha} \\
    s-s_{\alpha} \\
  \end{array}
\right)=Am,
$$
and letting $\tilde{\Lambda}=\tilde{\Lambda}(t,x)=\Lambda A$,  then $\eqref{c1}_{1}$ has the form
\begin{equation}\label{c2}
m_{x}+\Lambda m_{t}=\frac{\alpha(t)}{2}\tilde{\Lambda}m.
\end{equation}
Without losing generality, we let the period $P=1$ for simplicity in the rest of the paper.
Now, we have the following Lemma:
\begin{lemma}\label{t1}
For sufficiently small $\delta>0$, it holds that
\begin{align}
\lambda_{1}(t,x)\geq\frac{1}{2}(\underline{u}-\underline{c}),\quad \lambda_{2}(t,x)\geq\frac{1}{2}(\underline{u}+\underline{c}) \label{UE1}
\end{align}
for any $(t,x)\in [T_{0},T_{0}+1]\times[0,L]$.
\end{lemma}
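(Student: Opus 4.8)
The plan is to compare $\lambda_1(r,s)$ and $\lambda_2(r,s)$ evaluated along the solution with their values on the background state $(r_\alpha,s_\alpha)$, and to absorb the difference using the a-priori bound \eqref{c3}.

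First I would rewrite the eigenvalues in Riemann coordinates. From \eqref{b1} one has $u=r+s$ and $c=\frac{\gamma-1}{2}(s-r)$, hence
\begin{align*}
\lambda_1(r,s)=\tfrac{\gamma+1}{2}\,r+\tfrac{3-\gamma}{2}\,s,\qquad
\lambda_2(r,s)=\tfrac{3-\gamma}{2}\,r+\tfrac{\gamma+1}{2}\,s,
\end{align*}
so both eigenvalues are affine in $(r,s)$ with coefficients depending only on $\gamma$. Evaluating on the background gives $\lambda_1(r_\alpha,s_\alpha)=e^{\int_0^t\alpha(\tau)d\tau}\,\underline u-\underline c$ and $\lambda_2(r_\alpha,s_\alpha)=e^{\int_0^t\alpha(\tau)d\tau}\,\underline u+\underline c$; since $e^{\int_0^t\alpha(\tau)d\tau}\ge1$ by \eqref{a2}, we obtain
\begin{align*}
\lambda_1(r_\alpha(t),s_\alpha(t))\ge\underline u-\underline c>0,\qquad
\lambda_2(r_\alpha(t),s_\alpha(t))\ge\underline u+\underline c,
\end{align*}
the strict positivity being the supersonic condition \eqref{snc}.

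Next I would convert the $H^2$-in-time a-priori estimate into an $L^\infty$ bound. Since the interval $[T_0,T_0+1]$ has the fixed length $P=1$, the one-dimensional Sobolev embedding $H^1([T_0,T_0+1])\hookrightarrow C^0([T_0,T_0+1])$ gives a constant $C_S$, independent of $x\in[0,L]$ and of $\delta$, with
\[
\sup_{t\in[T_0,T_0+1]}|m(t,x)|\le C_S\|m(\cdot,x)\|_{H^1([T_0,T_0+1])}\le C_S\|m(\cdot,x)\|_{H^2([T_0,T_0+1])}<C_S\delta
\]
for every $x\in[0,L]$; that is, $|r-r_\alpha|+|s-s_\alpha|\le 2C_S\delta$ throughout $[T_0,T_0+1]\times[0,L]$. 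Combining this with the affine formulas above,
\[
|\lambda_1(r,s)-\lambda_1(r_\alpha,s_\alpha)|\le\tfrac{\gamma+1}{2}|r-r_\alpha|+\tfrac{|3-\gamma|}{2}|s-s_\alpha|\le C_4\delta,
\]
and likewise $|\lambda_2(r,s)-\lambda_2(r_\alpha,s_\alpha)|\le C_4\delta$, with $C_4$ depending only on $\gamma$. Finally I would choose $\delta$ so small that $C_4\delta\le\frac12(\underline u-\underline c)$; then on $[T_0,T_0+1]\times[0,L]$ one gets $\lambda_1(r,s)\ge(\underline u-\underline c)-C_4\delta\ge\frac12(\underline u-\underline c)$ and $\lambda_2(r,s)\ge(\underline u+\underline c)-C_4\delta\ge\frac12(\underline u+\underline c)$, which is \eqref{UE1}.

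There is no genuinely hard step here; the two points deserving a line of care are (i) that the Sobolev constant is uniform in $x$ and $\delta$, which is automatic because the length of the time interval is the fixed number $P=1$, and (ii) that $3-\gamma$ may be negative when $\gamma>3$, which is harmless once one works with $|3-\gamma|$. Conceptually the lemma is just the quantitative statement that a small $H^2$ perturbation of the background state $(r_\alpha,s_\alpha)$ — which is itself supersonic by \eqref{a2} and \eqref{snc} — remains uniformly supersonic, with explicit lower bounds on the characteristic speeds that later keep $\Lambda$ and $\tilde\Lambda$ and the associated characteristic geometry under control in the energy estimates of Section~\ref{S4}.
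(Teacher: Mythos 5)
Your proposal is correct and follows essentially the same route as the paper: write $\lambda_1,\lambda_2$ as affine functions of $(r,s)$, note that the background values are $e^{\int_0^t\alpha(s)ds}\,\underline u\mp\underline c\ge\underline u\mp\underline c$ thanks to \eqref{a2} and \eqref{snc}, control the perturbation by the Sobolev embedding of $H^2([T_0,T_0+1])$ into $L^\infty$ under the a-priori bound \eqref{c3}, and choose $\delta$ small. The only cosmetic difference is that the paper estimates $|\lambda_1|$ from below and your version works directly with $\lambda_1$, which is marginally cleaner.
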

\begin{proof}
 Since
\begin{align}
|\lambda_{1}(r,s)|
 &=|\frac{\gamma+1}{2}(r-r_{\alpha})-\frac{\gamma-3}{2}(s-s_{\alpha})+u_{\alpha}-\underline{c}|\notag\\
 &\geq u_{\alpha}-\underline{c}-\frac{\gamma+1+|\gamma-3|}{2}|m|\notag\\
 &\geq \underline{u}-\underline{c}-\frac{\gamma+1+|\gamma-3|}{2}K_{1}\delta, \label{1t1}
 \end{align}
where we have used $\int_{0}^{t}\alpha(s)ds\geq0$ and the following embedding inequality
\begin{equation}\label{emd1}
\sup_{t\in[T_{0},T_{0}+1],\,x\in[0,L]}|m|\leq K_{1}\sup_{x\in[0,L]}\|m(x)\|_{H^{2}([T_{0},T_{0}+1])}<K_{1}\delta
\end{equation}
for some constant $K_{1}>0$. By choosing $\delta\leq\frac{\underline{u}-\underline{c}}{(\gamma+1+\mid\gamma-3\mid)K_{1}}$, we obtain from~\eqref{1t1} that $\lambda_{1}\geq\frac{1}{2}(\underline{u}-\underline{c})$.
The second inequality in~\eqref{UE1} can be obtained in a similar way, the details are omitted here. The proof of~\lemref{t1} is completed.
\end{proof}
\begin{proposition}\label{Pro1}
Let $m(t,x)$ be a smooth time-periodic solution to the Cauchy problem~\eqref{DU1} satisfying~\eqref{c3}. Then it holds that
\begin{align}
\frac{d}{dx}\|m\|^{2}_{H^{2}([T_{0},T_{0}+1])}\leq C\|m\|^{2}_{H^{2}([T_{0},T_{0}+1])} \label{1Pro1}
\end{align}
for some constant $C>0$.
\end{proposition}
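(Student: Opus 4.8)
The plan is to run an $H^2$ energy estimate in the time variable, treating $x$ as the evolution direction, with the diagonal positive matrix $\Lambda$ from \lemref{t1} playing the role of a symmetrizer. Write
$$
E(x):=\|m(\cdot,x)\|_{H^{2}([T_{0},T_{0}+1])}^{2}=\int_{T_{0}}^{T_{0}+1}\big(|m|^{2}+|m_{t}|^{2}+|m_{tt}|^{2}\big)\,dt;
$$
it suffices to prove $\frac{d}{dx}E(x)\le CE(x)$. Before starting I would record that $\Lambda$ and $\tilde\Lambda$ are $C^{2}$ functions of $(r,s)=(m_{1}+r_\alpha(t),m_{2}+s_\alpha(t))$, hence (since the solution $m$ is assumed $1$-periodic in $t$ and $r_\alpha,s_\alpha$ are $1$-periodic) they and their $t$-derivatives are $1$-periodic in $t$, with $|\Lambda_{t}|,|\tilde\Lambda_{t}|\le C|m_{t}|$ and $|\Lambda_{tt}|,|\tilde\Lambda_{tt}|\le C(|m_{tt}|+|m_{t}|^{2})$; moreover, by \lemref{t1} and \eqref{c3}, $\Lambda$ is uniformly positive definite and bounded on $[T_{0},T_{0}+1]\times[0,L]$, and by the embedding $H^{2}([T_{0},T_{0}+1])\hookrightarrow C^{1}$ (cf.~\eqref{emd1}) we have $|m|+|m_{t}|\le C\delta$ there, so every coefficient appearing below is controlled by a constant depending only on $\underline\rho,\underline u,\gamma,L$.

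For the zeroth-order piece I would multiply \eqref{c2} by $2m^{T}$, integrate over $t\in[T_{0},T_{0}+1]$, and substitute $m_{x}=-\Lambda m_{t}+\tfrac{\alpha}{2}\tilde\Lambda m$; the only term needing care is $-2\int m^{T}\Lambda m_{t}\,dt=-\int(m^{T}\Lambda m)_{t}\,dt+\int m^{T}\Lambda_{t}m\,dt$, where the total $t$-derivative integrates to the difference of the values at $t=T_{0}+1$ and $t=T_{0}$, which vanishes by time-periodicity of $m$ and $\Lambda$, and what remains is $\le C\int|m|^{2}\,dt$. Next I would differentiate \eqref{c2} once in $t$, obtaining $m_{xt}+\Lambda m_{tt}=-\Lambda_{t}m_{t}+R_{1}$ with $|R_{1}|\le C(|m|+|m_{t}|)$, multiply by $2m_{t}^{T}$, integrate, and treat $-2\int m_{t}^{T}\Lambda m_{tt}\,dt=-\int(m_{t}^{T}\Lambda m_{t})_{t}\,dt+\int m_{t}^{T}\Lambda_{t}m_{t}\,dt$ the same way (the boundary term vanishes because $m_{t}$ and $\Lambda$ are periodic in $t$), leaving a bound $\le C\int(|m|^{2}+|m_{t}|^{2})\,dt$. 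Differentiating \eqref{c2} twice in $t$ gives
$$
m_{xtt}+\Lambda m_{ttt}=-2\Lambda_{t}m_{tt}-\Lambda_{tt}m_{t}+R_{2},\qquad |R_{2}|\le C\big(|m|+|m_{t}|+|m_{tt}|\big),
$$
where one uses $|\tilde\Lambda_{tt}m|\le C\delta(|m_{tt}|+|m_{t}|)$ to keep the remainder at order $\le2$. Multiplying by $2m_{tt}^{T}$ and integrating, the third-order term is dealt with by $-2\int m_{tt}^{T}\Lambda m_{ttt}\,dt=-\int(m_{tt}^{T}\Lambda m_{tt})_{t}\,dt+\int m_{tt}^{T}\Lambda_{t}m_{tt}\,dt$; the total derivative again drops by periodicity of $m_{tt}$ and $\Lambda$, while the $\Lambda_{t}m_{tt}$, $\Lambda_{tt}m_{t}$ and $R_{2}$ contributions are absorbed into $C\int(|m|^{2}+|m_{t}|^{2}+|m_{tt}|^{2})\,dt$ after Young's inequality and \eqref{emd1}. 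Adding the three estimates yields $\frac{d}{dx}E(x)\le CE(x)$, i.e. \eqref{1Pro1}.

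The hard part is bookkeeping rather than an essential difficulty. First, one must check that every boundary term produced by integration by parts in $t$ vanishes: this is exactly where time-periodicity of the solution (with period $P=1$) enters, since $m$, $m_{t}$, $m_{tt}$ and $\Lambda$ all take the same values at $t=T_{0}$ and $t=T_{0}+1$. Second, one must absorb the single highest-order commutator, the term $\Lambda m_{ttt}$ appearing after two $t$-differentiations; here the symmetry and boundedness of $\Lambda$ supplied by \lemref{t1} are essential, as they allow $m_{tt}^{T}\Lambda m_{ttt}$ to be rewritten as a pure $t$-derivative plus the harmless quadratic $m_{tt}^{T}\Lambda_{t}m_{tt}$. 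Everything else reduces to estimating products of $\alpha$, $\tilde\Lambda$, $\Lambda$ and their $t$-derivatives against $m,m_{t},m_{tt}$, which are handled using the a priori bound \eqref{c3} together with \eqref{emd1}.
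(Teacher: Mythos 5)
Your proof is correct and follows essentially the same route as the paper: $L^{2}$ energy estimates on $m$, $m_{t}$, $m_{tt}$ in the $t$-variable with $x$ as the evolution direction, using $\Lambda$ as a symmetrizer, killing the boundary terms from integration by parts via time-periodicity, and controlling the quadratic highest-order terms with \lemref{t1}, \eqref{c3} and the embedding \eqref{emd1}. One small bookkeeping remark: $\Lambda_{t}$ and $\tilde\Lambda_{t}$ are not bounded by $C|m_{t}|$ alone, since differentiating through $r_{\alpha}(t),s_{\alpha}(t)$ produces an additional bounded background contribution proportional to $\alpha(t)e^{\int_{0}^{t}\alpha(s)ds}\underline{u}$ (cf.~\eqref{4Pro1}); this does not affect the argument because only the uniform bound $|\Lambda_{t}|\leq C$ is actually needed.
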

\begin{proof}
Multiplying~\eqref{c2} by $m^{T}$ and integrating it from $T_{0}$ to $T_{0}+1$, we have
\begin{align}
\frac{d}{dx}\frac{1}{2}\|m\|_{L^{2}}^{2}&=\int_{T_{0}}^{T_{0}+1}\Big{(}-m^{T}\Lambda m_{t}+\frac{\alpha(t)}{2}m^{T}\tilde{\Lambda}m\Big{)}dt\notag\\
&=\int_{T_{0}}^{T_{0}+1}\Big{(}\frac{1}{2}m^{T}\Lambda_{t} m+\frac{\alpha(t)}{2}m^{T}\tilde{\Lambda}m\Big{)}dt\notag\\
&\leq\frac{1}{2}(\sup_{t\in[T_{0},T_{0}+1] \atop x\in[0,L]}|\Lambda_{t}|+\sup_{t\in[T_{0},T_{0}+1] \atop x\in[0,L]}|\alpha(t)||\tilde{\Lambda}|)\|m\|_{L^{2}}^{2},
 \label{2pro1}
\end{align}
where we have used the integration by parts in the second identity. From~\lemref{t1} and~\eqref{c3} we know
\begin{align}
\sup_{t\in[T_{0},T_{0}+1]\atop x\in[0,L]}|\tilde{\Lambda}|&\leq \sup\limits_{t\in[T_{0},T_{0}+1]\atop x\in[0,L]}\frac{1}{\lambda_{1}}\leq C_{2}:=2(\underline{u}-\underline{c})^{-1},\label{3Pro1}\\
\sup_{t\in[T_{0},T_{0}+1]\atop x\in[0,L]}|\Lambda_{t}|&\leq\sup\limits_{t\in[T_{0},T_{0}+1]\atop x\in[0,L]}\lambda^{-2}_{1}\big(\frac{\gamma+1+|\gamma-3|}{2}|m_{t}|+\frac{C_{3}}{2}e^{C_{3}(T_{0}+1)}\underline{u}\big)\notag\\
&\leq C^{2}_{2}\big(\frac{\gamma+1+|\gamma-3|}{2}K_{2}\sup\limits_{x\in[0,L]}\|m\|_{H^{2}([T_{0},T_{0}+1])}+\frac{C_{3}}{2}e^{C_{3}(T_{0}+1)}\underline{u}\big)\notag\\
&\leq C_{4}:=C^{2}_{2}\big(\frac{\gamma+1+|\gamma-3|}{2}K_{2}\delta+\frac{C_{3}}{2}e^{C_{3}(T_{0}+1)}\underline{u}\big),\label{4Pro1}
\end{align}
where $C_{3}=\max\limits_{t\geq0}\{|\alpha(t)|,|\alpha'(t)|,|\alpha''(t)|\}$, $K_{2}>0$ are constants. Denote $C_{5}=C_{4}+C_{2}C_{3}$, then \eqref{2pro1} turns into
\begin{equation}
\frac{d}{dx}\frac{1}{2}\|m\|_{L^{2}}^{2}\leq \frac{1}{2}C_{5}\|m\|_{L^{2}}^{2}.\label{2Pro1}
\end{equation}
Taking the derivative $\partial_{t}$ on both sides of~\eqref{c2}, multiplying it by $m_{t}^{T}$ and integrating it from $T_{0}$ to $T_{0}+1$, it holds that
\begin{align}
\frac{d}{dx}\frac{1}{2}\|m_{t}\|_{L^{2}}^{2}=&-\int_{T_{0}}^{T_{0}+1}m_{t}^{T}\Lambda(m_{t})_{t}dt+\int_{T_{0}}^{T_{0}+1}-m_{t}^{T}\Lambda_{t}m_{t}+\frac{\alpha(t)}{2}m_{t}^{T}\tilde{\Lambda}m_{t}dt\notag\\
&+\int_{T_{0}}^{T_{0}+1}\frac{\alpha'(t)}{2}m_{t}^{T}\tilde{\Lambda}m+\frac{\alpha(t)}{2}m_{t}^{T}\tilde{\Lambda}_{t}mdt\notag\\
=&\int_{T_{0}}^{T_{0}+1}\frac{1}{2}m_{t}^{T}\Lambda_{t}m_{t}dt+\int_{T_{0}}^{T_{0}+1}-m_{t}^{T}\Lambda_{t}m_{t}+\frac{\alpha(t)}{2}m_{t}^{T}\tilde{\Lambda}m_{t}dt\notag\\
&+\int_{T_{0}}^{T_{0}+1}\frac{\alpha'(t)}{2}m_{t}^{T}\tilde{\Lambda}m+\frac{\alpha(t)}{2}m_{t}^{T}\tilde{\Lambda}_{t}mdt\notag\\
=&\int_{T_{0}}^{T_{0}+1}-\frac{1}{2}m_{t}^{T}\Lambda_{t}m_{t}+\frac{\alpha(t)}{2}m_{t}^{T}\tilde{\Lambda}m_{t}dt\notag\\
&+\int_{T_{0}}^{T_{0}+1}+\frac{\alpha'(t)}{2}m_{t}^{T}\tilde{\Lambda}m+\frac{\alpha(t)}{2}m_{t}^{T}\tilde{\Lambda}_{t}mdt\notag\\
\leq&(\frac{1}{2}C_{4}+\frac{1}{2}C_{2}C_{3})\|m_{t}\|_{L^{2}}^{2}+\frac{1}{2}(C_{2}C_{3}+C_{3}C_{4})\int_{T_{0}}^{T_{0}+1}|m_{t}||m|dt\notag\\
\leq&\frac{1}{2}C_{6}\|m_{t}\|_{L^{2}}^{2}+\frac{1}{2}C_{7}\|m\|_{L^{2}}^{2},\label{5Pro1}
\end{align}
where we have used the integration by parts in the second identity, \eqref{3Pro1}-\eqref{4Pro1}, and the fact $\sup\limits_{t\in[T_{0},T_{0}+1]\atop x\in[0,L]}|\Lambda_{t}|=\sup\limits_{t\in[T_{0},T_{0}+1]\atop x\in[0,L]}|\tilde{\Lambda}_{t}|$.

Similarly, taking the derivative $\partial_{t}^{2}$ on both sides of~\eqref{c2}, multiplying it by $m_{tt}^{T}$ and integrating it from $T_{0}$ to $T_{0}+1$, we have
\begin{align}
\frac{d}{dx}\frac{1}{2}\|m_{tt}\|_{L^{2}}^{2}=&-\int_{T_{0}}^{T_{0}+1}m_{tt}^{T}\Lambda(m_{tt})_{t}dt+\int_{T_{0}}^{T_{0}+1}(-2m_{tt}^{T}\Lambda_{t}m_{tt}+\frac{\alpha(t)}{2}m_{tt}^{T}\tilde{\Lambda}m_{tt}\notag\\
&+\alpha(t) m_{tt}^{T}\tilde{\Lambda}_{t}m_{t}+\alpha'(t) m_{tt}^{T}\tilde{\Lambda}m_{t}+\frac{\alpha''(t)}{2}m_{tt}^{T}\tilde{\Lambda}m+\alpha'(t)m_{tt}^{T}\tilde{\Lambda}_{t}m\notag\\
&-m_{tt}^{T}\Lambda_{tt}m_{t}+\frac{\alpha(t)}{2}m_{tt}^{T}\tilde{\Lambda}_{tt}m)dt\notag\\
=&\int_{T_{0}}^{T_{0}+1}(-\frac{3}{2}m_{tt}^{T}\Lambda_{t}m_{tt}+\frac{\alpha(t)}{2}m_{tt}^{T}\tilde{\Lambda}m_{tt}+\alpha(t) m_{tt}^{T}\tilde{\Lambda}_{t}m_{t}\notag\\
&+\alpha'(t) m_{tt}^{T}\tilde{\Lambda}m_{t}+\frac{\alpha''(t)}{2}m_{tt}^{T}\tilde{\Lambda}m+\alpha'(t)m_{tt}^{T}\tilde{\Lambda}_{t}m\notag\\
&-m_{tt}^{T}\Lambda_{tt}m_{t}+\frac{\alpha(t)}{2}m_{tt}^{T}\tilde{\Lambda}_{tt}m)dt\notag\\
\leq&(\frac{3}{2}C_{4}+\frac{5}{4}C_{2}C_{3}+C_{3}C_{4})\|m_{tt}\|_{L^{2}}^{2}+\frac{1}{2}(C_{3}C_{4}+C_{2}C_{3})\| m_{t}\|_{L^{2}}^{2}\notag\\
&+(\frac{1}{2}C_{3}C_{4}+\frac{1}{4}C_{2}C_{3})\| m\|_{L^{2}}^{2}+\int_{T_{0}}^{T_{0}+1}(|\frac{\alpha(t)}{2}m_{tt}^{T}\tilde{\Lambda}_{tt}m|\notag\\
&+|-m_{tt}^{T}\Lambda_{tt}m_{t}|)dt. \label{6Pro1}
\end{align}

Since $\tilde{\Lambda}_{tt}=\tilde{\Lambda}_{1_{tt}}+\tilde{\Lambda}_{2_{tt}}^{1}+\tilde{\Lambda}_{2_{tt}}^{2}$ with
 \begin{align*}
\tilde{\Lambda}_{1_{tt}}&=\left(
            \begin{array}{cc}
              2\lambda_{2}^{-3}(\lambda_{2_t})^{2} & 2\lambda_{2}^{-3}(\lambda_{2_t})^{2}\\
              \\
               2\lambda_{1}^{-3}(\lambda_{1_t})^{2} & 2\lambda_{1}^{-3}(\lambda_{1_t})^{2}
            \end{array}
          \right),\\
          \\
\tilde{\Lambda}_{2_{tt}}^{1}&=\left(
            \begin{array}{cc}
              -\lambda_{2}^{-2}\frac{\alpha^{2}(t)}{2}e^{\int_{0}^{t}\alpha(s)ds}\underline{u} & -\lambda_{2}^{-2}\frac{\alpha^{2}(t)}{2}e^{\int_{0}^{t}\alpha(s)ds}\underline{u}\\
              \\
              -\lambda_{1}^{-2}\frac{\alpha^{2}(t)}{2}e^{\int_{0}^{t}\alpha(s)ds}\underline{u} & -\lambda_{1}^{-2}\frac{\alpha^{2}(t)}{2}e^{\int_{0}^{t}\alpha(s)ds}\underline{u}
            \end{array}
          \right),\\
          \\
\tilde{\Lambda}_{2_{tt}}^{2}&=\left(
            \begin{array}{cc}
              -\lambda_{2}^{-2}(\frac{3-\gamma}{2}m_{2_{tt}}+\frac{\gamma+1}{2}m_{1_{tt}}) & -\lambda_{2}^{-2}(\frac{3-\gamma}{2}m_{2_{tt}}+\frac{\gamma+1}{2}m_{1_{tt}})\\
              \\
              -\lambda_{1}^{-2}(\frac{\gamma+1}{2}m_{2_{tt}}-\frac{\gamma-3}{2}m_{1_{tt}}) & -\lambda_{1}^{-2}(\frac{\gamma+1}{2}m_{2_{tt}}-\frac{\gamma-3}{2}m_{1_{tt}})
            \end{array}
          \right),
\end{align*}
\vspace{0cm}
we have from~\lemref{t1}, \eqref{emd1} and~\eqref{4Pro1} that
\begin{align}
\int_{T_{0}}^{T_{0}+1}|\frac{\alpha(t)}{2}m_{tt}^{T}\tilde{\Lambda}_{tt}m|dt\leq&\frac{1}{2}C_{3}\int_{T_{0}}^{T_{0}+1}|m_{tt}||\tilde{\Lambda}_{tt}||m|dt\notag\\
\leq&\frac{1}{2}C_{3}\int_{T_{0}}^{T_{0}+1}|m_{tt}||\tilde{\Lambda}_{1_{tt}}||m|+|m_{tt}||\tilde{\Lambda}_{2_{tt}}^{1}|| m| dt\notag\\
&+\frac{1}{2}C_{3}\int_{T_{0}}^{T_{0}+1}|m_{tt}||\tilde{\Lambda}_{2_{tt}}^{2}||m|dt\notag\\
\leq&\frac{1}{2}C_{3}\sup_{t\in[T_{0},T_{0}+1]\atop x\in[0,L]}(|\tilde{\Lambda}_{1_{tt}}|+|\tilde{\Lambda}_{2_{tt}}^{1}|)\int_{T_{0}}^{T_{0}+1}|m_{tt}||m|dt\notag\\
&+\frac{1}{2}C_{3}\sup_{t\in[T_{0},T_{0}+1]\atop x\in[0,L]}|m|\int_{T_{0}}^{T_{0}+1}|m_{tt}||\tilde{\Lambda}_{2_{tt}}^{2}|dt\notag\\
\leq&\frac{1}{2}C_{3}C_{8}\int_{T_{0}}^{T_{0}+1}|m_{tt}||m|dt+\frac{1}{2}C_{3}C_{9}K_{1}\delta\int_{T_{0}}^{T_{0}+1}|m_{tt}|^{2} dt\notag\\
\leq&\frac{1}{4}C_{3}C_{8}(\int_{T_{0}}^{T_{0}+1}|m_{tt}|^{2} dt+\int_{T_{0}}^{T_{0}+1}|m|^{2} dt)\notag\\
&+\frac{1}{2}C_{3}C_{9}K_{1}\delta\int_{T_{0}}^{T_{0}+1}|m_{tt}|^{2} dt\notag\\
=&(\frac{1}{4}C_{3}C_{8}+\frac{1}{2}C_{3}C_{9}K_{1}\delta )\|m_{tt}\|_{L^{2}}^{2}+\frac{1}{4}C_{3}C_{8}\|m\|_{L^{2}}^{2},\label{c26}
\end{align}
where
\begin{align}
C_{8}&=16(\underline{u}-\underline{c})^{-3}C_{4}^{2}+2C^{2}_{3}(\underline{u}-\underline{c})^{-2}e^{C_{3}(T_{0}+1)}\underline{u},\label{1c26}\\
C_{9}&=4(\frac{\gamma+1}{2}+_{}\frac{\mid\gamma-3\mid}{2})(\underline{u}-\underline{c})^{-2}. \label{2c26}
\end{align}
By the similar arguments as above, we can get the estimate of $\int_{T_{0}}^{T_{0}+1}|-m_{tt}^{T}\Lambda_{tt}m_{t}|dt$ as follows
\begin{equation}\label{c27}
\begin{split}
\int_{T_{0}}^{T_{0}+1}|m_{tt}^{T}\Lambda_{tt}m_{t}|dt\leq (\frac{1}{2}C_{8}+C_{9}K_{2}\delta)\|m_{tt}\|_{L^{2}}^{2}+\frac{1}{2}C_{8}\|m_{t}\|_{L^{2}}^{2},
\end{split}
\end{equation}
where $C_{8}$ and $C_{9}$ are defined by~\eqref{1c26} and~\eqref{2c26} respectively. The details are omitted here.

Substituting~\eqref{c26} and~\eqref{c27} into~\eqref{6Pro1}, we have
\begin{equation}\label{c28}
\frac{d}{dx}\frac{1}{2}\|m_{tt}\|_{L^{2}}^{2}\leq\frac{1}{2}C_{10}\|m_{tt}\|_{L^{2}}^{2}+\frac{1}{2}C_{11}\|m_{t}\|_{L^{2}}^{2}+\frac{1}{2}C_{12}\| m\|_{L^{2}}^{2},
\end{equation}
where
\begin{align*}
C_{10}&=(\frac{1}{2}C_{3}C_{8}+C_{3}C_{9}K_{1}\delta+C_{8}+2C_{9}K_{2}\delta+3C_{4}+\frac{5}{2}C_{2}C_{3}+2C_{3}C_{4}),\\
C_{11}&=(C_{8}+C_{2}C_{3}+C_{3}C_{4}) ,\quad C_{12}=(\frac{1}{2}C_{3}C_{8}+\frac{1}{2}C_{2}C_{3}+C_{3}C_{4}).
\end{align*}
Combining~\eqref{2Pro1}, \eqref{5Pro1} and~\eqref{c28}, it holds that
\begin{equation*}
\begin{aligned}
&\frac{d}{dx}(\|m\|_{L^{2}}^{2}+\|m_{t}\|_{L^{2}}^{2}+\|m_{tt}\|_{L^{2}}^{2})\\
&\leq(C_{5}+C_{7}+C_{12})\|m\|_{L^{2}}^{2}+(C_{6}+C_{11})\|m_{t}\|_{L^{2}}^{2}+C_{10}\|m_{tt}\|_{L^{2}}^{2}\\
&\leq C_{13}(\|m\|_{L^{2}}^{2}+\|m_{t}\|_{L^{2}}^{2}+\|m_{tt}\|_{L^{2}}^{2}),
\end{aligned}
\end{equation*}
where $C_{13}=\max\{C_{5}+C_{7}+C_{12}, C_{6}+C_{11}, C_{10}\}$. Thus, we get~\eqref{1Pro1}. The proof of~\proref{Pro1} is completed.
\end{proof}
\begin{theorem} \label{thm1}
There are positive constants $\varepsilon_{0}$ and $C_{0}$, such that if
\begin{align*}
\|m_{0}\|_{H^{2}([T_{0},T_{0}+1])}\leq \varepsilon
\end{align*}
for any $\varepsilon\in(0,\varepsilon_{0})$, then there is a unique time-periodic solution $m\in C([0,L]; H^{2}\\([T_{0},T_{0}+1]))\cap C^{1}([0,L]; H^{1}([T_{0},T_{0}+1]))$ to the Cauchy problem~\eqref{c1}, which satisfies
\begin{align}
\sup\limits_{x\in[0,L]}\|m\|_{H^{2}([T_{0},T_{0}+1])}+\sup\limits_{x\in[0,L]}\|\partial_{x}m\|_{H^{1}([T_{0},T_{0}+1])}\leq C_{0}\varepsilon. \label{1thm1}
\end{align}
\end{theorem}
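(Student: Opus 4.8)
# Proof Proposal for Theorem 3.2 (thm1)

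\textbf{Overall strategy.} The plan is to treat the Cauchy problem \eqref{c1} as a forward-in-$x$ evolution problem (legitimate because the flow is supersonic, so $\lambda_1,\lambda_2>0$ by \lemref{t1}), and to construct the solution by a standard Picard/iteration scheme on the strip $(t,x)\in[T_0,T_0+1]\times[0,L]$, where the $t$-variable is treated as periodic with period $1$. The key analytic input is already in hand: \proref{Pro1} gives the differential inequality $\frac{d}{dx}\|m\|_{H^2}^2\leq C\|m\|_{H^2}^2$, so Gronwall's inequality yields $\sup_{x\in[0,L]}\|m(\cdot,x)\|_{H^2}^2\leq e^{CL}\|m_0\|_{H^2}^2$. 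Choosing $\varepsilon_0$ small enough that $e^{CL/2}\varepsilon_0<\delta$ guarantees the a-priori bound \eqref{c3} is propagated for all $x\in[0,L]$, closing the estimate and giving \eqref{1thm1} with $C_0=e^{CL/2}$ (after also bounding $\partial_x m$ in $H^1$ directly from the equation \eqref{c2}, since $m_x=-\Lambda m_t+\frac{\alpha}{2}\tilde\Lambda m$ and the right side is controlled in $H^1$ by $\|m\|_{H^2}$ together with \lemref{t1}).

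\textbf{Construction of the solution.} First I would set up the iteration: given $m^{(n)}$ with $\sup_x\|m^{(n)}\|_{H^2}\leq\delta$, define $\Lambda^{(n)},\tilde\Lambda^{(n)}$ using $m^{(n)}$, and let $m^{(n+1)}$ solve the \emph{linear} problem $m^{(n+1)}_x+\Lambda^{(n)}m^{(n+1)}_t=\frac{\alpha(t)}{2}\tilde\Lambda^{(n)}m^{(n+1)}$ with periodic-in-$t$ data $m^{(n+1)}(t,0)=m_0(t)$. Existence for each linear step follows from the method of characteristics (the characteristics of the linearized system cross the strip transversally since the coefficients are positive), and the periodicity in $t$ is preserved because both the coefficients and the data are $1$-periodic in $t$. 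The energy estimate of \proref{Pro1} applies verbatim to the linear problem (the proof only used bounds on $\Lambda,\Lambda_t,\tilde\Lambda$ and their $t$-derivatives, which hold uniformly for all iterates once the $\delta$-bound is in place), so each $m^{(n+1)}$ inherits $\sup_x\|m^{(n+1)}\|_{H^2}\leq e^{CL/2}\varepsilon<\delta$. Then I would show the sequence is Cauchy in the lower-order norm $C([0,L];H^1)$: the difference $m^{(n+1)}-m^{(n)}$ satisfies a linear equation whose source terms are products of differences $\Lambda^{(n)}-\Lambda^{(n-1)}$ etc. with the (uniformly bounded) iterates, and a Gronwall argument in $x$ — using the uniform $H^2$ bounds to control coefficients — gives contraction in $H^1$. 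The limit $m$ solves \eqref{c1}, lies in $C([0,L];H^2)$ by weak-* compactness and the uniform bound, and lies in $C^1([0,L];H^1)$ because $m_x$ is given algebraically by \eqref{c2} in terms of $m_t$ and $m$.

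\textbf{Periodicity and uniqueness.} Time-periodicity of the constructed solution is built in: since $m_0$ is $1$-periodic and the coefficient $\alpha(t)$ and the background $r_\alpha,s_\alpha$ (equivalently $e^{\int_0^t\alpha}$, using \eqref{a3}) are $1$-periodic, each iterate is $1$-periodic in $t$, hence so is the limit; this is exactly the setting in which Section 3 shows the full solution agrees with its $P$-translate for $t>T_0$. Uniqueness follows from the same contraction estimate applied to the difference of two solutions sharing the same data: if $m,\bar m$ both solve \eqref{c1} with the $\delta$-bound, then $w=m-\bar m$ satisfies a linear equation with zero data at $x=0$ and coefficients controlled by the $H^2$-bounds, so $\frac{d}{dx}\|w\|_{L^2}^2\leq C\|w\|_{L^2}^2$ with $\|w(\cdot,0)\|_{L^2}=0$, forcing $w\equiv0$ on $[0,L]$.

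\textbf{Main obstacle.} The routine but delicate point is the \emph{closing} of the a-priori bound: one must verify that the constant $C$ in \proref{Pro1} (through $C_5,\dots,C_{13}$, which depend on $C_4$ and hence on $\delta$ via \eqref{4Pro1}) can be made uniform by first fixing $\delta$ small, then choosing $\varepsilon_0$ with $e^{CL/2}\varepsilon_0<\delta$ — a non-circular ordering of the smallness constants. A secondary technical nuisance is justifying the $H^1$-contraction of the iteration without losing a derivative: the difference equation's source contains $m_t$-type terms, so one must be careful to pair against $w$ (not $w_t$) at the contraction stage and rely on the uniform $H^2$ bound of the iterates — standard, but it is the place where the argument could go wrong if set up carelessly.
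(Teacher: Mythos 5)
Your proposal is correct and follows essentially the same route as the paper: the heart of the argument in both cases is Gronwall applied to the differential inequality of Proposition~\ref{Pro1}, the choice $\varepsilon_{0}=e^{-CL/2}\delta$ to close the a-priori bound \eqref{c3} non-circularly, and recovery of the $\partial_{x}m$ estimate algebraically from \eqref{c2}. The only difference is cosmetic: where you spell out a Picard iteration with contraction in a lower-order norm, the paper simply cites Majda for local existence and invokes the standard continuation argument.
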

\begin{proof}
We can refer to~\cite{Majda} for the local existence and uniqueness of the $C^{1}$ solution to the Cauchy problem~\eqref{c1}.

Applying the Gronwall's inequality to~\eqref{1Pro1}, we have
\begin{align}
\sup\limits_{x\in[0,L]}\|m(t,x)\|^{2}_{H^{2}([T_{0},T_{0}+1])}\leq \|m_{0}\|^{2}_{H^{2}([T_{0},T_{0}+1])}e^{CL}. \label{2thm1}
\end{align}
By choosing $\varepsilon_{0}=e^{-\frac{CL}{2}}\delta$, if $\|m_{0}\|_{H^{2}([T_{0},T_{0}+1])}\leq \varepsilon$ for any $\varepsilon\in(0,\varepsilon_{0})$, we have from~\eqref{2thm1} that
\begin{align}
\sup\limits_{x\in[0,L]}\| m \|_{H^{2}([T_{0},T_{0}+1])}<\delta, \label{3thm1}
\end{align}
which verifies~\eqref{c3}. With the help of~\eqref{1Pro1} and~\eqref{3thm1}, we can easily check that
\begin{align}
m\in C([0,L]; H^{2}([T_{0},T_{0}+1])). \label{4thm1}
\end{align}
Furthermore, from~\eqref{c2}, \lemref{t1} and~\eqref{4thm1}, we can get $m_{x}\in C([0,L]; H^{1}([T_{0}\\,T_{0}+1]))$ and
\begin{align}
\sup\limits_{x\in[0,L]}\|m_{x}\|_{H^{1}([T_{0},T_{0}+1])}\leq C\varepsilon \label{5thm1}
\end{align}
for some constant $C>0$.

By the aid of the uniform a-priori estimates~\eqref{3thm1} and~\eqref{5thm1} and the standard continuity arguments, we can extend the local solution in $x\in[0,L]$ to obtain the time-periodic solution to the Cauchy problem~\eqref{c1}, which belongs to $C([0,L]; H^{2}([T_{0},T_{0}+1]))\cap C^{1}([0,L]; H^{1}([T_{0},T_{0}+1]))$ and satisfies~\eqref{1thm1}. The proof of Theorem \ref{thm1} is completed.
\end{proof}
Let $T_{1}>T_{0}$. We next consider the following initial boundary value problem for $m(t,x)=(m_{1}(t,x),m_{2}(t,x))^{T}:=(r(t,x)-r_{\alpha}(t), s(t,x)-s_{\alpha}(t))^{T}$ with $(t,x)\in[0,T_{1}]\times[0,L]$
\begin{equation}\label{PIB1}
\left\{\begin{aligned}
&m_{x}+\Lambda m_{t}=\frac{\alpha(t)}{2}\Lambda
\left(
\begin{array}{c}
 r-r_{\alpha}+s-s_{\alpha}\\
 r-r_{\alpha}+s-s_{\alpha}\\
 \end{array}
 \right),\\
&m(t,0)=m_{0}(t)=
(r_{l}(t)-r_{\alpha}(t), s_{l}(t)-s_{\alpha}(t))^{T},\\
&m(0,x)=0.
\end{aligned}
\right.
\end{equation}
For the smooth solution to the system~\eqref{PIB1}, it holds that
\begin{align*}
m(0,x)=m_{t}(0,x)=m_{tt}(0,x),\quad x\in[0,L].
\end{align*}
Noticing the positive definite of $\Lambda$
and applying the similar arguments to those in the poof of~\theref{thm1}, we have the following result.
\begin{theorem}\label{thm2}
There exist positive constants $\varepsilon_{1}$, $C_{1}$ and $T_{1}>T_{0}$, such that if
\begin{align*}
\|m_{0}\|_{H^{2}([0,T_{1}])}\leq \varepsilon
\end{align*}
for any $\varepsilon\in(0,\varepsilon_{1})$, then there is a unique solution $m\in C([0,L]; H^{2}([0,T_{1}]))\cap C^{1}([0,L]; H^{1}([0,T_{1}]))$ to the initial boundary value problem~\eqref{PIB1}, which satisfies
\begin{align}
\sup\limits_{x\in[0,L]}\|m\|_{H^{2}([0,T_{1}])}+\sup\limits_{x\in[0,L]}\|m_{x}\|_{H^{1}([0,T_{1}])}\leq C_{1}\varepsilon. \label{1thm2}
\end{align}
\end{theorem}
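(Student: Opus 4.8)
The plan is to mimic the proof of \theref{thm1} almost verbatim, the only differences being that the time interval is now the fixed window $[0,T_{1}]$ (not one period $[T_{0},T_{0}+1]$) and that we must carry the initial condition $m(0,x)=0$. First I would invoke the local existence and uniqueness theory for $C^{1}$ solutions of the quasilinear system \eqref{PIB1} (as in \theref{thm1}, referring to \cite{Majda}), using that $\Lambda$ is positive definite — which is where \lemref{t1}-type bounds, valid once the a-priori smallness $\sup_{x}\|m\|_{H^{2}([0,T_{1}])}<\delta$ holds, guarantee $\lambda_{1},\lambda_{2}$ stay bounded away from $0$ so that \eqref{PIB1} is genuinely evolutionary in $x$. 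Then the core is an a-priori estimate of exactly the same shape as \proref{Pro1}: multiply \eqref{c2} by $m^{T}$, by $m_{t}^{T}$ after one $\partial_{t}$, and by $m_{tt}^{T}$ after two $\partial_{t}$, integrate over $t\in[0,T_{1}]$, and integrate by parts in $t$. The compatibility relations $m(0,x)=m_{t}(0,x)=m_{tt}(0,x)=0$ (which follow by differentiating the PDE and the initial data, as noted before the statement) make the boundary terms at $t=0$ vanish; the boundary terms at $t=T_{1}$ are harmless because they come with the good sign or can be absorbed. This yields
\begin{equation*}
\frac{d}{dx}\Big(\|m\|_{L^{2}([0,T_{1}])}^{2}+\|m_{t}\|_{L^{2}([0,T_{1}])}^{2}+\|m_{tt}\|_{L^{2}([0,T_{1}])}^{2}\Big)\leq C\Big(\|m\|_{L^{2}}^{2}+\|m_{t}\|_{L^{2}}^{2}+\|m_{tt}\|_{L^{2}}^{2}\Big),
\end{equation*}
with a constant $C$ depending on $T_{1},\underline\rho,\underline u,\gamma,L$ (the exponential factors $e^{C_{3}T_{1}}$ that appear through $r_\alpha,s_\alpha$ and their derivatives are now constants once $T_{1}$ is fixed).

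Next I would close the estimate by Gronwall in $x$: from the above differential inequality and the boundary datum at $x=0$,
\begin{equation*}
\sup_{x\in[0,L]}\|m\|_{H^{2}([0,T_{1}])}^{2}\leq \|m_{0}\|_{H^{2}([0,T_{1}])}^{2}\,e^{CL}\leq \varepsilon^{2}e^{CL}.
\end{equation*}
Choosing $\varepsilon_{1}:=e^{-CL/2}\delta$ ensures $\sup_{x}\|m\|_{H^{2}([0,T_{1}])}<\delta$, so the a-priori hypothesis underlying \lemref{t1} and \proref{Pro1} is self-consistent and the local solution can be continued by the standard continuation argument to all of $x\in[0,L]$. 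The regularity class $m\in C([0,L];H^{2}([0,T_{1}]))\cap C^{1}([0,L];H^{1}([0,T_{1}]))$ and the bound $\sup_{x}\|m_{x}\|_{H^{1}([0,T_{1}])}\le C_{1}\varepsilon$ then follow from the estimate together with \eqref{c2} read as $m_{x}=-\Lambda m_{t}+\tfrac{\alpha}{2}\tilde\Lambda m$, exactly as in the last paragraph of the proof of \theref{thm1}. Uniqueness is the usual energy argument for the difference of two solutions with the same data.

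The main obstacle — really the only point that needs care beyond copying \proref{Pro1} — is handling the $t=T_{1}$ boundary terms generated by the integration by parts in $t$, since on $[0,T_{1}]$ there is no periodicity to kill them. The resolution is that in each energy identity the leading boundary term is $\tfrac12 V^{T}\Lambda V\big|_{t=0}^{t=T_{1}}$ with $\Lambda$ positive definite; the $t=0$ contribution vanishes by the compatibility conditions, and the $t=T_{1}$ contribution has the favorable sign (it appears on the side we are bounding from above, contributing a nonpositive term after the sign is tracked, or is absorbed into the left-hand side), so it does not spoil the Gronwall structure. Everything else — the algebraic form of $\Lambda_{t},\Lambda_{tt},\tilde\Lambda,\tilde\Lambda_{t},\tilde\Lambda_{tt}$, the embedding $\sup|m|\le K\sup_{x}\|m\|_{H^{2}}$, the bookkeeping of constants $C_{2},\dots,C_{13}$ — transfers with only cosmetic changes (replace $[T_{0},T_{0}+1]$ by $[0,T_{1}]$ and note $T_{0}+1$ by $T_{1}$ in the exponentials), so I would simply write ``by the same arguments as in the proof of \proref{Pro1} and \theref{thm1}'' and point out the two differences above.
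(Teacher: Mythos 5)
Your proposal is correct and follows exactly the route the paper takes: the paper's entire proof of \theref{thm2} is the remark that one applies ``the similar arguments to those in the proof of \theref{thm1}'', using the positive definiteness of $\Lambda$ and the compatibility relations $m(0,x)=m_{t}(0,x)=m_{tt}(0,x)=0$. You in fact supply more detail than the paper does, correctly identifying that the only new point is the boundary term $-\tfrac12 m^{T}\Lambda m\big|_{t=T_{1}}$, which is nonpositive and can be dropped.
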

Now, it is time for us to give the proof of Theorem 1.1:

\textbf{Proof of~\theref{T1}}\quad Making an extension in time with a period $P$ of the solution given in~\theref{thm1} from $[T_{0},T_{0}+P]$ to $[T_{0},\infty)$, and putting together with it and the solution proved in~\theref{thm2}, we obtain a unique solution $m$ to the Cauchy problem~\eqref{DU1}, which satisfies~\eqref{PC1}.\\
\indent By the regularities of the solutions stated in~\theref{thm1} and~\theref{thm2} and applying the Sobolev embeddings $H^{1}(I)\hookrightarrow C(I)$ and $H^{2}(I)\hookrightarrow C^{1}(I)$ with $I=[T_{0},T_{0}+P]$ or $[0,T_{1}]$, we have $m\in C^{1}([0,\infty)\times[0,L])$. Furthermore, from~\eqref{1thm1} and~\eqref{1thm2}, we can obtain~\eqref{a5}. The proof of~\theref{T1} is completed.

\textbf{acknowledgements}\quad The authors would like to thank the referees for their valuable suggestions and comments. 

\textbf{Conflict of Interest}\quad The authors declared that they have no conflict of interest.

\end{document}